\theoremstyle{plain}
\newtheorem{theorem}{Теорема}
\newtheorem*{theoremF}{Теорема F}
\newtheorem*{lemmao}{Лемма}
\theoremstyle{definition}
\theoremstyle{definition}
\newtheorem{remark}{Замечание}
\newtheorem{example}{Пример}
\renewcommand{\Re}{{\rm Re \,}}
\renewcommand{\leq}{\leqslant}
\renewcommand{\geq}{\geqslant}
\renewcommand{\d}{{\rm \,d}}
\def\Re{\operatorname{Re}}
\def\RR{\mathbb R}
\def\CC{\mathbb C}
\def\NN{\mathbb N}
\def\DD{\mathbb D}
\def\BB{\mathbb B}
\DeclareMathOperator{\Hol}{Hol}
\author{\MakeUppercase{Р.\,А.~Баладай, Б.\,Н.\,Хабибуллин}}
\begin{document}
\selectlanguage{russian}
\maketit
\label{firstpage}

\abstract{В теории функций комплексных переменных нечасто встречаются точные поточечные оценки функций при известных интегральных ограничениях на их рост. Примером такого рода оценки может служит поточечная оценка модуля функции в пространстве Фока целых функций через интегральную норму этой функции.  Мы предлагаем некоторую функционально-аналитическую схему получения таких оценок в едином ключе и иллюстрируем ее на примерах классических пространств голоморфных  функций типа Фока\,--\,Баргмана и  Бергмана\,--\,Джрбашяна
в $n$-мерном комплексном пространстве, шаре, поликруге и т.\,п.} 

\keywords{образ мера, интегральная квазинорма, голоморфная функция, автоморфизм, пространство Фока, пространство Бергмана}

\udk{517.55 + 517.987.1  + 517.576}  

\englishabstract{Exact  pointwise estimates of the functions under certain integral constraints on their growth are not often met in the theory of functions of a complex variables. An example of this kind of estimation is the pointwise estimation of the module of function in the Fock space by integral norm of this function. We present some functional-analytic scheme for obtaining such estimates in a unified manner and illustrate it on the examples of classical  
Fock\,--\,Bargmann-type and Bergman\,--\,Djrbashian-type  spaces of holomorphic functions 
on n-dimensional complex spaces, balls, polidiscs etc.}

\englishkeywords{image of measure, integral pre-norm, holomorphic function, automorphism, Fock space, Bergman space}  

\Received{\_.\_.201\_}  

\thnk{Работа выполнена при финансовой поддержке Российского фонда
фундаментальных исследований, проект №~16-01-00024-а.}

\section*{Введение} 
Как обычно, $\NN$,  $\RR$,  $\CC$  --- множества  всех {\it натуральных,\/} {\it  вещественных\/} и  {\it комплексных чисел\/} в стандартных интерпретациях; 
$\lambda$  ---  {\it мера Лебега на\/}  $\CC^n$, $n\in \NN$,    со стандартной {\it евклидовой нормой\/} 
$|z|:=\sqrt{|z_1|^2+\dots +|z_n|^2}$, \quad $z=(z_1, \dots, z_n)\in \CC^n$.
Для непустого ($\neq \varnothing$) открытого связного  множества, т.е. {\it области,\/}  $D \subset \CC^n$ 	 через  $\Hol (D)$ обозначаем   векторное пространство над $\CC$  всех голоморфных  функций в $D$. Для пары  $\alpha\in (0,+\infty)$ и $p\in (0,+\infty]$  
конечность следующих квазинорм
\begin{subequations}\label{ino}
\begin{align}
\|f\|_{p,\alpha}&:=\left(\frac{p\alpha}{2\pi}\int\limits_{\CC}\bigl|f(z)\bigr|^pe^{-\frac{p\alpha}{2}|z|^2}\d \lambda(z)\right)^{1/p} \quad \text{{\it при\/} $p\neq +\infty$}, 
\tag{\ref{ino}a}\label{{ino}a}
\\
 \|f\|_{+\infty,\alpha}&:= \sup_{z\in \CC}
\Bigl(\bigl|f(z)\bigr|e^{-\frac{\alpha}{2}|z|^2}\Bigr)
\tag{\ref{ino}b}\label{{ino}b}
\end{align}
\end{subequations}
для функций $f\in \Hol (\CC)$ определяет {\it пространства Фока\,--\,Баргмана\/} $F_{\alpha}^p(\CC)$ \cite{F}--\cite{Lind}.
\begin{theoremF}[{\cite[Теорема 2.7, Следствие 2.8]{F}}] 
Для любых $\alpha\in (0,+\infty)$,  $p\in (0,+\infty]$ и точки  $z\in \CC$ верна точная оценка $\bigl|f(z)\bigr|e^{-\frac{\alpha}{2}|z|^2}\leq \|f\|_{p,\alpha}$, а оценка $\|f\|_{+\infty,\alpha}\leq \|f\|_{p,\alpha}$ не улучшаема.
\end{theoremF}
Теорема F очень легко переносится на $\CC^n$ с $n>1$ и, скорее всего, где-либо приведена. Нам были доступны лишь оценки   без точных констант  \cite[2.1]{Mas}, \cite{Lind}. Во всяком случае, без претензий на новизну,  точная  оценка для пространства Фока\,--\,Баргмана --- это Пример \ref{ex1} ниже.  Для получения такого рода точных оценок  в едином ключе ниже в разделе \ref{sec:1}, Теорема \ref{th:1}, предлагается простая функционально-аналитическая конструкция. Эта конструкция применима не только к пространствам голоморфных  функций, как это делается в разделе \ref{sec:2}, Теорема \ref{th:2}. Но эта возможность применения Теоремы \ref{th:1} к <<неголоморфным>> ситуациям  для явно заданных пространств функций  здесь не рассматривается.  Примеры  \ref{ex1}--\ref{ex3} из раздела \ref{sec:3} 
иллюстрируют Теорему \ref{th:2} в случаях конкретных пространств голоморфных функций типа Фока\,--\,Баргмана и Бергмана\,--\,Джрбашяна  в $\CC^n$, в шаре и  в поликруге. В частности, они существенно дополняют и развивают  \cite[Пример 1]{BalKh}.

\section{Общая функционально-аналитическая  схема}\label{sec:1}  
Используются терминология и сведения из \cite{Sch}.
Пусть $X$ --- локально компактное пространство,  счетное в бесконечности, $\mathcal M^+(X)$ --- конус положительных борелевских мер или мер Радона на $X$. В этом разделе \ref{sec:1} всюду $\mu$ --- мера из $\mathcal M^+(X)$. Для  этой меры $\mu$ рассматриваем класс отображений $\mathfrak a\colon X  \to X$, удовлетворяющих условию  
 \begin{enumerate}
\item[(A)] {\it отображение $\mathfrak a$ 
$\mu$-измеримо  и  $\mu(\mathfrak a^{-1}K)<+\infty$ 
для любого компакта $K$ из $X$, где, как обычно, ${\mathfrak a}^{-1}K$ --- прообраз $K$.}
\end{enumerate}
Для таких отображений $\mathfrak a$ определен {\it образ\/ ${\mathfrak a}\mu \in \mathcal M^+(X)$ меры\/ $\mu$ при отображении\/} ${\mathfrak a}$, 
действующий по правилу 
$({\mathfrak a}\mu) (K)=\mu ({\mathfrak a}^{-1}K) \quad\text{на произвольных компактах $K$ из $X$.}$
В  частности, для любой функции  $f\colon X\to [-\infty, +\infty] $ справедливо равенство 
\begin{equation*}
\int f \d ({\mathfrak a}\mu) =\int (f\circ {\mathfrak a}) \d \mu
\end{equation*} 
при условии {\it  $\mu$-интегрируемости суперпозиции\/}  $f\circ {\mathfrak a}$ (см. \cite[гл.~IV, \S~6, Теорема 60]{Sch}).

Зафиксируем пару точек  $x,y\in X$. Пусть отображение $\mathfrak a=\mathfrak a_x^y\colon X\to X$ удовлетворяет условию  (A) и 
\begin{equation}\label{c:a}
\mathfrak a_x^y (x)=y, \qquad \mathfrak a_x^y \mu=\mu; \qquad \text{$x,y\in X$ --- пара фиксированных точек}, 
\end{equation}
где второе  равенство означает, что мера $\mu$ {\it инвариантна относительно  отображения $\mathfrak a_x^y$.}

Пусть $U$ --- некоторый класс {\it полунепрерывных  сверху\/} функций $u\colon X\to [{-\infty},+\infty)$, {\it замкнутый относительно сложения\/} (поточечного)
 и  {\it инвариантный относительно отображения\/} $\mathfrak  a_x^y$ в том смысле, что $u\circ \mathfrak a_x^y\in U$ для любой функции $u\in U$. 

\begin{lemmao}\label{pr:1} 
	Пусть в  обозначениях и условиях этого раздела~{\rm \ref{sec:1}} функция $v\colon X\to \RR$ непрерывна,  
	\begin{equation}\label{c:va}
	(v-v\circ \mathfrak a_x^y)\in U,  \quad\text{а также, как следствие, 
		$u_x^y:= u\circ \mathfrak a_x^y+(v -v\circ \mathfrak a_x^y )\,{\in}\, U$} 
	\end{equation}
 для любой функции  $u\in U$. Тогда для каждой функции $u\in U$ и  каждой полунепрерывной сверху функции $\Phi \colon [-\infty,+\infty)\to \RR$   имеют место равенства
	\begin{equation}\label{0i0x}
		u(y)-v(y)+v(x){=}u_x^y(x),
\quad 		\int \Phi \circ (u-v) \, d \mu=\int \Phi \circ \bigl(u_x^y-v\bigr) \, d \mu 
\end{equation}
при условии, что существует конечный интеграл в левой части последнего равенства.
\end{lemmao}

\begin{proof} Первое равенство в \eqref{0i0x} следует из равенств
\begin{equation*}
u_x^y(x) =u\bigl(\mathfrak a_x^y(x)\bigr)+v(x)-v\bigl(\mathfrak a_x^y(x)\bigr){=}u(y)+v(x)-v(y), 
\end{equation*}
использующих \eqref{c:a}. Также с помощью \eqref{c:a} из цепочки равенств
\begin{multline*}
\int \Phi \circ (u-v) \d \mu {=}
\int \Phi \circ (u-v) \d (\mathfrak a_x^y\mu)=
\int \Phi \circ (u\circ \mathfrak a_x^y -v\circ \mathfrak a_x^y) \d \mu\\
= \int \Phi \circ \bigl(\underbrace{u\circ \mathfrak a_x^y -v\circ \mathfrak a_x^y+v}_{u_x^y}-v\bigr) \d \mu
=\int \Phi \circ \bigl(u_x^y-v\bigr) \d \mu
\end{multline*}
получаем второе равенство в \eqref{0i0x}.
\end{proof}

Далее рассматриваем {\it непрерывные  положительные функции} 
\begin{equation}\label{Phi}
\Phi \colon \RR\to (0,+\infty), \quad q\colon \RR\to (0,+\infty), \quad Q\colon (0,+\infty) \to (0,+\infty),
\end{equation}
продолжимые по непрерывности в точки $\pm\infty,0$ значениями из $[0,+\infty]$. 
Введем  
аналоги   <<{\it квазинорм\/}>> функции $u\in U$, а также, для каждой точки $x\in X$, функционала, действующего на функции ${{\mathsf u}}\in U$ по правилу  ${{\mathsf u}}\mapsto  (q\circ {{\mathsf u}})(x)\in (0,+\infty)$,
  а именно:
\begin{equation}\label{PhiQq}
\|u\|:=Q\biggl( \,\int \Phi\circ (u-v)\, d \mu\biggl)  \quad \text{на  $u\in U$}; \quad
\|\delta_x\|^*:=\sup_{\stackrel{0< \|\mathsf u\|< +\infty}{{{\mathsf u}}\in U}}\frac{q({{\mathsf u}}(x))}{
\|\mathsf u\|}>0  
\end{equation}
в предположении существования функции  $\mathsf u\in U$ с $\mathsf u(x)\neq -\infty$.
\begin{theorem}\label{th:1} В  условиях и обозначениях   \eqref{c:va}--\eqref{PhiQq} имеем
\begin{equation}\label{best}
q\bigl(u(y)-v(y)+v(x)\bigr) \leq \|\delta_x\|^* \cdot \|u\|
\quad\text{для  $u\in U$ с $\int \Phi\circ (u-v)\, d \mu\in (0,+\infty)$}.
\end{equation}
Если  в дополнение к \eqref{c:va} имеем также  $(v\circ \mathfrak a_x^y-v)\in U$, т.\,е.
\begin{equation}\label{c:va+}
(v-v\circ \mathfrak a_x^y)\in U\cap (-U)\neq \varnothing, \quad\text{где } -U:=\{-u\colon u\in U\},
\end{equation}
а  отображение ${{\mathsf u}}\mapsto {{\mathsf u}}\circ {\mathfrak a}_x^y$ --- сюръекция из $U\ni {\mathsf u}$ на $U$, то оценка \eqref{best} точна.
\end{theorem}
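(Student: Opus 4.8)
The plan is to read \eqref{best} off from the preceding Lemma together with the definition of $\|\delta_x\|^*$ in \eqref{PhiQq}. Put $\mathcal C:=\{u\in U:0<\|u\|<+\infty\}$, the class over which $\|\delta_x\|^*$ is taken; note that the hypothesis $\int\Phi\circ(u-v)\,d\mu\in(0,+\infty)$ places $u$ in $\mathcal C$, since $Q$ maps $(0,+\infty)$ into $(0,+\infty)$ by \eqref{Phi}. Fix such a $u$ and form $u_x^y$ as in \eqref{c:va}, so $u_x^y\in U$. By the second identity of \eqref{0i0x}, $\int\Phi\circ(u_x^y-v)\,d\mu=\int\Phi\circ(u-v)\,d\mu$, whence $\|u_x^y\|=\|u\|$; in particular $u_x^y\in\mathcal C$, so $u_x^y$ is an admissible competitor in \eqref{PhiQq} and $q\bigl(u_x^y(x)\bigr)\le\|\delta_x\|^*\,\|u_x^y\|=\|\delta_x\|^*\,\|u\|$. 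Substituting the first identity of \eqref{0i0x}, $u_x^y(x)=u(y)-v(y)+v(x)$, yields \eqref{best}.

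For sharpness I would show that $T\colon u\mapsto u_x^y$ is a $\|\cdot\|$-preserving bijection of $\mathcal C$ onto itself. Write $w:=v-v\circ\mathfrak a_x^y$ and $C_a\colon u\mapsto u\circ\mathfrak a_x^y$, so $T=T_w\circ C_a$ with $T_w\colon u\mapsto u+w$. By hypothesis $C_a$ is a bijection of $U$, and by \eqref{c:va+} we have $w\in U\cap(-U)$. The construction of $u_x^y=u\circ\mathfrak a_x^y+w$ in \eqref{c:va} shows $u\mapsto u+w$ keeps $U$ invariant; the identical construction applied to $-v$—admissible precisely because $-w=v\circ\mathfrak a_x^y-v\in U$—shows $u\circ\mathfrak a_x^y-w\in U$ for all $u\in U$, i.e. $u\mapsto u-w$ also keeps $U$ invariant. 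Hence $T_w$, and therefore $T=T_w\circ C_a$, is a bijection of $U$; since $\|u_x^y\|=\|u\|$ by \eqref{0i0x}, it restricts to a bijection of $\mathcal C$. Concretely the inverse is $S\colon\mathsf u\mapsto\mathsf u\circ\mathfrak b+(v-v\circ\mathfrak b)$ with $\mathfrak b:=(\mathfrak a_x^y)^{-1}$ (which again sends $y$ to $x$ and preserves $\mu$): using $\mathfrak a_x^y\circ\mathfrak b=\mathrm{id}$ and telescoping the $v$-terms gives $S\circ T=T\circ S=\mathrm{id}$.

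Granting this, the sharpness is immediate: by \eqref{0i0x} the ratio $q\bigl(u(y)-v(y)+v(x)\bigr)/\|u\|$ equals $q\bigl(u_x^y(x)\bigr)/\|u_x^y\|$, and as $u$ runs over $\mathcal C$ its image $u_x^y$ runs over all of $\mathcal C$, so
\[
\sup_{u\in\mathcal C}\frac{q\bigl(u(y)-v(y)+v(x)\bigr)}{\|u\|}=\sup_{\mathsf u\in\mathcal C}\frac{q\bigl(\mathsf u(x)\bigr)}{\|\mathsf u\|}=\|\delta_x\|^*;
\]
thus the constant $\|\delta_x\|^*$ in \eqref{best} cannot be lowered. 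The inequality is the routine half; the only genuine difficulty is the surjectivity of $T$ onto $\mathcal C$ in the sharpness part, and this is exactly what the two supplementary hypotheses are tailored to supply—bijectivity of $u\mapsto u\circ\mathfrak a_x^y$ turns the inclusion $T(U)\subseteq U$ into the invariance of $U$ under translation by $w$, while the symmetric membership $w\in U\cap(-U)$ yields the inverse translation and hence the missing surjectivity.
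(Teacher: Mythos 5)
Your proof is correct and follows essentially the same route as the paper's: the inequality is read off from the identities \eqref{0i0x} of the Lemma together with the definition of $\|\delta_x\|^*$ in \eqref{PhiQq}, and sharpness comes from the fact that under \eqref{c:va+} and the bijectivity of $\mathsf u\mapsto\mathsf u\circ\mathfrak a_x^y$ the map $u\mapsto u_x^y$ is a norm-preserving surjection onto the admissible class. Your explicit factorization of that map and construction of its inverse merely spell out in detail what the paper asserts in a single line.
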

\begin{proof} {\rm Из равенств \eqref{0i0x}   Леммы в обозначениях \eqref{PhiQq} ввиду \eqref{c:va} получаем
\begin{equation}\label{equxy}
\frac{q\bigl(u(y)-v(y)+v(x)\bigr)}{Q\bigl(\int \Phi\circ (u-v)\, d \mu\bigr)}=\frac{q\bigl(u_x^y(x)\bigr)}{Q\bigl(\int \Phi\circ (u_x^y-v)\, d \mu\bigr)}\quad\text{для  функций $u\in U$,} \quad u_x^y {\in}  U,
\end{equation}
с конечными значениями интегралов $\int \Phi\circ (\cdot -v)\, d \mu\in (0,+\infty)$ в знаменателях дробей. 
Применяя к правой части \eqref{equxy} операцию $\sup$ по всем $u_x^y\in U$, в обозначениях  \eqref{PhiQq} имеем \eqref{best}. При условии \eqref{c:va+} и сюръективности отображения $u\mapsto u\circ {\mathfrak a}_x^y$ для любой функции ${{\mathtt u}}\in U$ найдется функция $u\in U$, для которой в обозначениях  \eqref{c:va} из  Леммы имеем ${{\mathtt u}}=u_x^y$. Следовательно, применение операции $\sup$ по всем $u_x^y\in U$ с ограничениями  $0<\|u_x^y\|<+\infty$ к правой части реализует  значение  $\|\delta_x\|^*$.  При этом  {\it  равенство\/} в \eqref{equxy}  для {\it любой\/} функции $u\in U$ с соответствующими предположениями о конечности $\|u\|$ доказывает точность  \eqref{best}.}
\end{proof}

\section{Голоморфная версия схемы}\label{sec:2}   
Пусть $X:={D}\subset \CC^n$ --- область,  $\mu\in \mathcal M^+({D})$ --- ненулевая мера, $p\in (0,+\infty)$ и для функции  $w\colon {D}\to  \RR$  существует интеграл  
\begin{equation}\label{intw}
 \int\limits_{{D}}e^{-pw}\d \mu < +\infty.
\end{equation} 
Для $f\in \Hol ({D})$ и ненулевого интеграла из \eqref{intw} введем квазинормы 
\begin{subequations}\label{nf}
\begin{align}
\|f\|_{p;w}&{:=}\left(\frac{1}{\int\limits_{{D}}e^{-pw}\d \mu}
\int\limits_{{D}}\bigl|f\bigr|^pe^{-pw}\d \mu\right)^{1/p}<+\infty ,
\tag{\ref{nf}a}\label{{nf}a}
\\
 \|f\|_{+\infty; w}&{:=} \sup_{z\in {D}} \Bigl(\bigl|f(z)\bigr|e^{-w(z)}\Bigr),
\tag{\ref{nf}b}\label{{nf}b}
\end{align}
\end{subequations} 
обобщающие соответственно \eqref{{ino}a} и \eqref{{ino}b}.

\begin{theorem}\label{th:2} Пусть   $0, {\mathsf z}\in {D}$  и 
голоморфное отображение $\mathfrak a=\mathfrak a_0^{\mathsf z}\colon {D}\to {D}$ удовлетворяет условию   {\rm (A)} с равенством  $\mathfrak a_0^{\mathsf z}(0){=}\mathsf z$ из \eqref{c:a}, мера $\mu$ инвариантна относительно  $\mathfrak a_0^{\mathsf z}$ и
\begin{equation}\label{expw}
\exp(w-w\circ \mathfrak a_0^{\mathsf z}){\in} \bigl|\Hol({D})\bigr|:=\bigl\{|\varphi|\colon \varphi \in \Hol ({D})\bigr\}.
\end{equation}
Тогда для $f\in \Hol ({D})$  при $\|f\|_{p;w}<+\infty$ в обозначениях \eqref{nf} имеет место оценка 
\begin{equation}\label{besth}
\bigl|f(\mathsf z)\bigr|e^{-w(\mathsf z)} {\leq} \|\delta_0\|_{p;w}^*
\cdot \|f\|_{p;w}\cdot e^{-w(0)},  \quad\text{где  }\|\delta_{0}\|_{p;w}^* :=\sup_{{\stackrel{0<\|\varphi \|_{p;w}<+\infty}{\varphi \in \Hol({D})}}} \frac{\bigl|\varphi (0)\bigr|}{\|\varphi \|_{p;w}}\,.
\end{equation}
Если   $\varphi \mapsto \varphi \circ {\mathfrak a}_0^{\mathsf z}$ --- сюръекция из  $\Hol (D)\ni \varphi$  на $\Hol ({D})$, то эта оценка  точна. Если такое отображение $\mathfrak a_0^{\mathsf z}$  существует для каждой точки $\mathsf z \in D$, то при всех $p\in (0,+\infty)$ оценка 
\begin{equation}\label{nfnp}
\|f\|_{+\infty; w} {\leq}  \|\delta_0\|_{p;w}^* \cdot \|f\|_{p; w}\cdot e^{-w(0)}\quad \text{неулучшаема}.
\end{equation}
\end{theorem}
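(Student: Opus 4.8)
The plan is to obtain Theorem~\ref{th:2} as a direct specialization of the abstract Theorem~\ref{th:1}. First I would fix the dictionary: take $X:=D$, the marked points $x:=0$ and $y:=\mathsf z$, the automorphism $\mathfrak a_x^y:=\mathfrak a_0^{\mathsf z}$, and the class
\[
U:=\bigl\{\log|\varphi|\colon \varphi\in\Hol(D)\bigr\}
\]
of logarithms of moduli of holomorphic functions. This $U$ consists of plurisubharmonic functions with values in $[-\infty,+\infty)$, it is a semigroup under addition because $\log|\varphi_1|+\log|\varphi_2|=\log|\varphi_1\varphi_2|$, and it is invariant under $\mathfrak a_0^{\mathsf z}$ since $\log|\varphi|\circ\mathfrak a_0^{\mathsf z}=\log|\varphi\circ\mathfrak a_0^{\mathsf z}|\in U$, the composition $\varphi\circ\mathfrak a_0^{\mathsf z}$ being holomorphic. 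I would then put $v:=w$, write $C:=\int_{D}e^{-pw}\d\mu\in(0,+\infty)$ for the finite constant of \eqref{intw}, and choose the monotone triple
\[
\Phi(t):=e^{pt},\qquad q(t):=e^{t},\qquad Q(s):=(s/C)^{1/p},
\]
each strictly increasing and continuous up to $\pm\infty,0$ as \eqref{Phi} demands; here $\Phi(-\infty)=q(-\infty)=0$ absorbs the zeros of $f$, where $u=\log|f|=-\infty$.

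With $u:=\log|f|$ one computes $\Phi\circ(u-v)=|f|^{p}e^{-pw}$, so that $\|u\|=Q\bigl(\int\Phi\circ(u-v)\d\mu\bigr)=\|f\|_{p;w}$ and $q(\mathsf u(0))=|\varphi(0)|$ for $\mathsf u=\log|\varphi|$; hence the abstract $\|\delta_0\|^*$ of \eqref{PhiQq} coincides with $\|\delta_0\|_{p;w}^*$, and the side condition $\int\Phi\circ(u-v)\d\mu\in(0,+\infty)$ becomes $0<\|f\|_{p;w}<+\infty$ (the case $f\equiv0$ being trivial). It remains to verify hypothesis \eqref{c:va}: since $v-v\circ\mathfrak a_0^{\mathsf z}=w-w\circ\mathfrak a_0^{\mathsf z}$ and \eqref{expw} provides $\psi\in\Hol(D)$ with $e^{w-w\circ\mathfrak a_0^{\mathsf z}}=|\psi|$, we get $v-v\circ\mathfrak a_0^{\mathsf z}=\log|\psi|\in U$ and then $u_0^{\mathsf z}=\log|(f\circ\mathfrak a_0^{\mathsf z})\psi|\in U$. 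The measure-theoretic change of variables underlying \eqref{0i0x} is already packaged in Lemma~\ref{pr:1} through the invariance $\mathfrak a_0^{\mathsf z}\mu=\mu$, so Theorem~\ref{th:1} applies and, reading off
\[
q\bigl(u(\mathsf z)-v(\mathsf z)+v(0)\bigr)=|f(\mathsf z)|\,e^{-w(\mathsf z)}e^{w(0)},
\]
yields \eqref{besth} after dividing by $e^{w(0)}$.

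For the sharpness clause I would note that \eqref{expw} forces $|\psi|=e^{w-w\circ\mathfrak a_0^{\mathsf z}}>0$ everywhere, so $\psi$ is zero-free, $1/\psi\in\Hol(D)$, and $-(v-v\circ\mathfrak a_0^{\mathsf z})=\log|1/\psi|\in U$; this is exactly \eqref{c:va+}. Combined with the assumed bijectivity of $\varphi\mapsto\varphi\circ\mathfrak a_0^{\mathsf z}$ on $\Hol(D)$ (equivalently of $\mathsf u\mapsto\mathsf u\circ\mathfrak a_0^{\mathsf z}$ on $U$), the sharpness half of Theorem~\ref{th:1} transfers verbatim. Finally \eqref{nfnp} comes from taking $\sup_{\mathsf z\in D}$ in \eqref{besth}, legitimate once such $\mathfrak a_0^{\mathsf z}$ exists with its listed properties for every $\mathsf z\in D$, the constant $\|\delta_0\|_{p;w}^*$ being independent of $\mathsf z$; its sharpness is immediate on restricting to $\mathsf z=0$, because $\|\varphi\|_{+\infty;w}\ge|\varphi(0)|e^{-w(0)}$ shows that any near-extremal sequence for $\sup|\varphi(0)|/\|\varphi\|_{p;w}$ is already near-extremal in \eqref{nfnp}.

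I expect no substantive obstacle: the argument is a faithful translation of the abstract scheme into the holomorphic class $U$. The only places needing genuine care are the invariance of $U$, where the \emph{holomorphy} of $\mathfrak a_0^{\mathsf z}$ (not merely condition~(A)) is essential, and the identification \eqref{c:va}, which is precisely where \eqref{expw} does its work; beyond these, the analytic content—invariance of the integral—is inherited wholesale from Theorem~\ref{th:1}, and what remains is bookkeeping.
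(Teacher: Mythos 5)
Your proposal is correct and follows essentially the same route as the paper: the same dictionary $U=\{\ln|\varphi|\colon\varphi\in\Hol(D)\}$, $v=w$, $\Phi(t)=e^{pt}$, $q(t)=e^t$, $Q(s)=\bigl(s/\int_De^{-pw}\d\mu\bigr)^{1/p}$, the same use of \eqref{expw} to produce $\psi\in\Hol(D)$ with $w-w\circ\mathfrak a_0^{\mathsf z}=\ln|\psi|$, the same observation that $\psi$ is zero-free (so $1/\psi\in\Hol(D)$ and \eqref{c:va+} holds), and the same passage to the supremum over $\mathsf z$ for \eqref{nfnp}. Your extra remarks (explicit verification that $U$ is additively closed and $\mathfrak a_0^{\mathsf z}$-invariant via holomorphy, and the $\mathsf z=0$ argument for the sharpness of \eqref{nfnp}) only make explicit what the paper leaves implicit.
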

\begin{proof} По Теореме {\rm \ref{th:1}} при $x:=0$ и $y:={\mathsf z}$ с классом $U:=\bigl\{\ln |f|\colon f \in \Hol ({D})\bigr\}$,  с функциями $\Phi (t){:=}e^{pt}$, $q(t){:=}e^t$ от $t\in \RR$, 
а также с функцией 
\begin{equation*}
Q(t){:=}t^{1/p}\,\biggl(\int_D  e^{-pw}\d \mu\biggr)^{-1/p}, \quad t\in (0,+\infty),
\end{equation*} 
в \eqref{Phi}  получаем оценку \eqref{besth}. 
Из \eqref{expw} следует представление   $w-w\circ \mathfrak a_0^{\mathsf z}=\ln |\psi|$
с $\psi \in \Hol (D)$ и, поскольку  $w(D)\subset \RR$, функция $\psi$ не имеет нулей в $D$ и 
$1/\psi \in \Hol (D)$. Таким образом, $w-w\circ \mathfrak a_0^{\mathsf z} \in -U$, выполнено условие    \eqref{c:va+}
и при условии сюръективности из Теоремы \ref{th:1} получаем точность оценки \eqref{besth}.
Когда точка $\mathsf z$ пробегает всю область  $D$,  из неравенства \eqref{besth} и из определения  \eqref{{nf}b} 
следует неулучшаемая оценка \eqref{nfnp}. 
\end{proof}
 \begin{remark} {\it Условие \eqref{expw} эквивалентно плюригармоничности функции $w-w\circ \mathfrak a_0^{\mathsf z}$ в области $D$, когда   область  $D$  звездная  с центром в нуле\/} \cite[Предложение 2.2.13]{Klimek}. Именно таковы области $D$ ниже в Примерах \ref{ex1}--\ref{ex3} и Замечаниях \ref{rem:2}, \ref{rem:3}, где при определении функций, отображений, а также мер через их плотность точка   $z=(z_1,\dots,z_n)\in \CC^n$ пробегает  назначенные области  $D\subset \CC^n$.
 \end{remark}

\section{Примеры применения голоморфной версии}\label{sec:3}

\begin{example}\label{ex1} Для  $D=\CC^n$ и для $\mathsf{z}=(\mathsf{z}_1, \dots, \mathsf{z}_n)\in \CC^n$ 
 подходящий выбор 
\begin{equation}\label{muCa}
\mu:=\lambda, \quad 
\mathfrak a_0^{\mathsf z}\colon z \mapsto z+{\mathsf z}\quad\text{---  {\it автоморфизм,\/}} \quad  
w({z}):=\frac{\alpha}{2}|{z}|^2
\end{equation} 
с {\it плюригармонической\/} по $z=(z_1, \dots, z_n)\in \CC^n$, $z_k\in \CC$,  функцией 
\begin{equation*}
w(z)-(w\circ \mathfrak a_0^{\mathsf z})(z)=-\frac{\alpha}{2}|\mathsf z|^2-\alpha \Re\,  \langle z,{\mathsf z}\rangle,
\end{equation*}  
где  
$\langle z,\mathsf{z}\rangle:= z_1\bar{\mathsf{z}}_1+\dots + z_n\bar{\mathsf{z}}_n$ --- скалярное произведение 
\cite[1.1.2(1)]{Rudin},
 $\|\delta_0\|_{p;w}^*{=}1$ ввиду  {\it субгармоничности\/} $|f|^p$
при $f\in \Hol({\CC^n})$ и {\it радиальности\/} $w$. Таким образом, Теорема \ref{th:2} сразу дает  Теорему F  для $F_{\alpha}^p(\CC^n)$  при $n\geq 1$.
\end{example}
\begin{example}\label{ex1+}
Другой возможный выбор при тех же $D=\CC^n$, $\mathsf z =({\mathsf z}_1,\dots,{\mathsf z}_n) \in \CC^n$, $\mu:=\lambda$ и   $\mathfrak a_0^{\mathsf z}$  из \eqref{muCa} --- это функция 
\begin{equation*}
w({z}):=\sum_{j=1}^n\frac{\alpha_j}{2}|z_j|^2\quad 
\text{в обозначении  $\vec\alpha= (\alpha_1,\dots,\alpha_n) \in (0,+\infty)^n$}
\end{equation*} 
  с {\it плюригармонической\/} по $z=(z_1,\dots, z_n)\in \CC^n$ функцией  
\begin{equation*}
w(z)-(w\circ \mathfrak a_0^{\mathsf z})(z)=-\sum_{j=1}^n\Bigl(\frac{\alpha_j}{2}|{\mathsf z}_j|^2+\alpha_j \Re\, z_j\bar{\mathsf z}_j\Bigr),
\end{equation*} 
где,  по-прежнему,  $\|\delta_0\|_{p;w}^*{=}1$ из  {\it плюрисубгармоничности\/} $|f|^p$ при $f\in \Hol({\CC^n})$ и {\it радиальности по каждой переменной\/} $z_j$ функции $w$. 

\begin{remark}\label{rem:2} Возможен и аналогичный выбор функции $w$, {\it радиально\/} зависящей от векторов-компонент из подпространств, представляющих $\CC^n$ в виде прямой суммы.   Теорема \ref{th:2} с точными оценками \eqref{besth}--\eqref{nfnp}  применима  к любым подобным конструкциям
с $\|\delta_0\|_{p;w}^*{=}1$ ввиду  {\it субгармоничности сужений\/} функции $f\in \Hol(\CC^n)$ на такие подпространства.
\end{remark}
\end{example}

\begin{example}\label{ex2} Для {\it единичного шара\/} $D:=\BB:=\{z\in \CC^n\colon |z|<1\}$ и точки ${\mathsf z}\in \BB$  выбираем меру $\mu$ через плотность \cite[2.2.6(ii)]{Rudin}
\begin{equation*}
\d \mu (z):=\frac{1}{(1-|z|^2)^{n+1}}\d \lambda(z), \quad \mathfrak a_0^{\mathsf z} :=\varphi_{\mathsf z}
\quad\text{--- {\it автоморфизм шара $\BB$,\/}}
\end{equation*} 
 выписанный  в явном виде в \cite[2.2.1(2)]{Rudin} с требуемыми в Теореме \ref{th:2}  свойствами \cite[2.2.2(i), 2.2.2(vi), 2.2.6(ii)]{Rudin}. Полагаем также
\begin{equation*}
w({z}):=-\frac{\alpha+n+1}{p}\ln \bigl(1-|{z}|^2\bigr), \quad \alpha \in (-1,+\infty), \quad p\in (0,+\infty).
\end{equation*} 
При этом  по тождеству \cite[2.2.2(iv)]{Rudin} получаем {\it плюригармоническую\/} по  $z=(z_1, \dots, z_n)\in \BB$ фу\-н\-к\-ц\-ию
\begin{equation*}
w(z)-(w\circ \mathfrak a_0^{\mathsf z})(z)=
\frac{\alpha+n+1}{p}\Bigl (\ln \bigl|(1-\langle z,{\mathsf z}\rangle )^2\bigr|-\ln \bigl(1-|\mathsf z|^2\bigr) 
\Bigr).
\end{equation*} 
Вновь применима  Теорема \ref{th:2} с точными оценками \eqref{besth}--\eqref{nfnp}, где  $\|\delta_0\|_{p;w}^*{=}1$ так же, как в Примере \ref{ex1}.
Получаемые при таком выборе  подклассы  функций  $f\in \Hol (\BB)$ с $\|f\|_{p;w}<+\infty$ --- пространства
 Бергмана\,--\,Джрбашяна  $A_{\alpha}^p(\BB)$ \cite[1.3]{Schv}, \cite{Zh}, или  Фока\,--\,Баргмана  для шара $\BB$ в иной терминологии \cite[5.1.2]{Per}.
\end{example}

\begin{example}\label{ex3} Пусть $\DD:=\{z\in \CC\colon |z|<1\}$ --- единичный круг. Для  единичного поликруга 
$\DD^n\subset \CC^n$ и точки  $\mathsf z =({\mathsf z}_1,\dots,{\mathsf z}_n) \in \DD^n$ выбираем меру $\mu$ через ее плотность 
\begin{equation*}
\d \mu (z_1,\dots, z_n)=\bigotimes\limits_{j=1}^{n}\frac{1}{\bigl(1-|z_j|^2\bigr)^{2}}\d \lambda (z_j) , \quad z=(z_1,\dots, z_n)\in \DD^n,
\end{equation*} 
где в правой части первого равенства стоит тензорное произведение мер \cite[гл.~IV, \S~8]{Sch}, 
\begin{equation*}
\mathfrak a_0^{\mathsf z}(z)=\Bigl(\frac{{\mathsf z_1}-z_1}{1-z_1\bar{\mathsf z}_1}, \dots, \frac{{\mathsf z_n}-z_n}{1-z_n\bar{\mathsf z}_n}\Bigr)\quad\text{---  {\it автоморфизм поликруга\/} \cite[7.3.3]{RudinP},} 
\end{equation*}
удовлетворяющий требованиям Теоремы \ref{th:2}. Полагаем также
\begin{equation*}
w(z):=-\sum\limits_{j=1}^{n} \frac{\alpha_j +2}{p}\ln \bigl(1-|z_j|^2\bigr), \quad   \vec\alpha=
(\alpha_1,\dots,\alpha_n) \in (-1,+\infty)^n, \quad p\in (0,+\infty). 
\end{equation*}
В этом случае   {\it плюригармонична\/} по $z\in \DD^n$ функция 
\begin{equation*}
w(z)-(w\circ \mathfrak a_0^{\mathsf z})(z)=
\sum_{j=1}^n\frac{\alpha_j+2}{p}\Bigl (\ln \bigl|(1- z_j\bar{\mathsf z}_j)^2\bigr|-\ln \bigl(1-|\mathsf z_j|^2\bigr) 
\Bigr). 
\end{equation*}  
Применима Теорема \ref{th:2} с точными оценками \eqref{besth}--\eqref{nfnp}, где  $\|\delta_0\|_{p,w}^*=1$ из  {\it плюрисубгармоничности\/} $|f|^p$
при $f\in \Hol({\DD^n})$ и {\it радиальности по каждой переменной $z_j$\/} функции $w$.
При этом  функции   $f\in \Hol (\DD^n)$ с $\|f\|_{p;w}<+\infty$ образуют   пространства
 Бергмана\,--\,Джрбашяна  $A_{\vec \alpha}^p(\DD^n)$ на  единичном поликруге  $\DD^n$ \cite[1.3]{Schv}. 

\begin{remark}\label{rem:3} Точные оценки \eqref{besth}--\eqref{nfnp} Теоремы \ref{th:2} можно легко получить и для пространств Бергмана\,--\,Джрбашяна голоморфных функций на произвольном поликруге/полицилиндре
\begin{equation*}
D=\prod_{j=1}^n r_j\DD,  \quad \vec r=(r_1,\dots, r_n)\in (0,+\infty]^n, \quad\text{где $(+\infty)\cdot \DD:=\CC$.}
\end{equation*}
Для этого следует применить  гомотетию по  переменным $z_j$ при $r_j<+\infty$, переводящую $r_j\DD$ в $\DD$, вместе с техникой Примера  \ref{ex1+} для тех $j$, для которых $r_j=+\infty$.
\end{remark}
\end{example}

\begin{remark} Теорема \ref{th:2} позволяет получать оценки, 
зачастую  точные, и других интегральных квазинорм в духе \cite[Теорема 2.10]{F}, \cite[Лемма 2.2]{Mas}. Для этого необходимо
последовательно применить  две операции: 
1) подействовать возрастающей функцией $F\colon [0,+\infty)\to \RR$ на обе части неравенства из \eqref{besth}; 2) проинтегрировать полученное неравенство по конечной мере ${\nu}\in \mathcal M^+(D)$. Тогда имеет место оценка
\begin{equation*}
\int_D F\left(\bigl|f({\mathsf z})\bigr|e^{-w(\mathsf z)}\right)\d \nu ({\mathsf z}){\leq} F\left(\|\delta_0\|_{p;w}^*\cdot \|f\|_{p;w}\cdot e^{-w(0)}\right)\nu(D)\,.
\end{equation*}
Достаточный произвол в выборе  $F$ и  $\nu$ обеспечивает разнообразие таких оценок.    
\end{remark}

Авторы глубоко признательны рецензенту за ряд полезных замечаний и исправлений.

\fullauthor{Баладай Рустам Алексеевич}
\address{450076, г. Уфа, ул. З. Валиди, 32, БашГУ, ФМиИТ, кафедра высшей алгебры и геометрии, аспирант}
\email{rbaladai@gmail.com}

\fullauthor{Хабибуллин Булат Нурмиевич}
\address{450074, г. Уфа, ул. З. Валиди, 32, БашГУ, ФМиИТ, заведующий кафедрой высшей алгебры и геометрии, профессор}
\email{khabib-bulat@mail.ru}



\fullauthor{Baladai Rustam Alekseevich}
\address{Postgraduate of the Chair of Higher Algebra and Geometry, Dept. of Math. \& IT, Bash. State Univ., Z. Validi Str., 32, Ufa, Bashkortostan, 450076, Russian Federation}
\email{rbaladai@gmail.com}

\fullauthor{Khabibullin Bulat Nurmievich}
\address{Prof., Head of the Chair of Higher Algebra and Geometry, Dept. of Math. \& IT, Bash. State Univ., Z. Validi Str., 32,  Ufa, Bashkortostan, 450076, Russian Federation}
\email{khabib-bulat@mail.ru}


\label{lastpage}  

\end{document}